\newtheorem{theorem}{Theorem}
\newtheorem{lemma}[theorem]{Lemma}
\newtheorem{corollary}[theorem]{Corollary}
\newtheorem{definition}[theorem]{Definition}
\newtheorem*{remark}{Remark}
\Crefname{conjecture}{Conjecture}{Conjectures}
\theoremstyle{plain}
\newtheorem{example}[theorem]{Example}
\theoremstyle{plain}
\author{Robert Schneider}
\thanks{This material is based upon work partially supported by the National Science Foundation under grant DMS-1128155.}
\address{Department of Mathematics and Computer Science\newline
Emory University\newline
400 Dowman Dr., W401\newline
Atlanta, Georgia 30322}
\email{robert.schneider@emory.edu}
\begin{document}

\title
[Jacobi's triple product, mock theta functions, unimodal sequences]
{Jacobi's triple product, mock theta functions,\\unimodal sequences and the $q$-bracket}


\begin{abstract}
In Ramanujan's final letter to Hardy, he listed examples of a strange new class of infinite series he called ``mock theta functions''. It turns out all of these examples are essentially specializations of a so-called universal mock theta function $g_3(z,q)$ of Gordon--McIntosh. Here we show that $g_3$ arises naturally from the reciprocal of the classical Jacobi triple product---and is intimately tied to rank generating functions for unimodal sequences, which are connected to mock modular and quantum modular forms---under the action of an operator related to statistical physics and partition theory, the $q$-bracket of Bloch--Okounkov. Secondly, we find $g_3(z,q)$ to extend in $q$ to the entire complex plane minus the unit circle, and give a finite formula for this universal mock theta function at roots of unity, that is simple by comparison to other such formulas in the literature; we also indicate similar formulas for other $q$-hypergeometric series. Finally, we look at interesting ``quantum'' behaviors of mock theta functions inside, outside, and on the unit circle. 
\end{abstract}

\maketitle

\section{Introduction: Sea Change}\label{Sect0}

When Ramanujan put to sea 
from India for Cambridge University in 1914, destined to revolutionize number theory, a revolution in physics was already full-sail in Europe. Just one year earlier, the Rutherford--Bohr model of atomic shells heralded the emergence of a paradoxical new {\it quantum theory} of nature that contradicted common sense. 
In 1915, Einstein would describe how space, light, matter, geometry itself, warp and bend in harmonious interplay. 
The following year, 
Schwarzschild found Einstein's equations to predict the existence of monstrously inharmonious 
{\it black holes}, that 
we can now study directly (just very recently) using interstellar gravitational waves \cite{Gravwaves}. 

During Ramanujan's five years working with G. H. Hardy, 
news of the paradigm shift in physics must have created a thrill among the mathematicians at Trinity College, Isaac Newton's {\it alma mater}. 
Certainly Hardy would have been aware of the sea change. After all, J. J. Thomson's discovery of the electron, as well as his subsequent ``plum-pudding'' atomic model, had been made at Cambridge's Cavendish Laboratory; Rutherford 
had done his post-doctoral work with Thomson there; 
and Niels Bohr came to Cambridge to work under Thomson in 1911 \cite{Gamow}. 
Moreover, Hardy's 
intellectual colleague David Hilbert was in a public race with Einstein to write down the equations of General Relativity \cite{Race}. 

We don't know how aware Ramanujan was of these happenings in physics, 
yet his flights of imagination and break with academic tradition were expressions of the scientific {\it Zeitgeist} of the age. After returning to India in 1919, as he approached his own tragic event horizon, 
Ramanujan's thoughts ventured into realms that---like the domains of subatomic particles and gravitational waves---would require the technology of a future era to navigate \cite{PhelanSchneider}.

In the final letter he sent to Hardy, dated 1920, Ramanujan described a new class of mathematical objects he called {\it mock theta functions} \cite{RamanujanCollected}, that mimic certain behaviors of classical {\it modular forms} \cite{Apostol, Ono_web}.
 These $q$-series turn out to have 
profoundly curious analytic, combinatorial and algebraic properties. Ramanujan gave a prototypical example $f(q)$ of a mock theta function, defined by the $q$-series
\begin{equation}\label{f(q)def}
f(q):=\sum_{n=0}^{\infty}\frac{q^{n^2}}{(-q;q)_n^{2}},
\end{equation}  
where $|q|<1$ and the $q$-Pochhammer symbol is defined by $(z;q)_0:=1,\  (z;q)_n:=\prod_{0\leq i \leq n-1}(1-zq^i)$, with $(z;q)_{\infty}:=\lim_{n\to \infty}(z;q)_n$. Ramanujan claimed that $f(q)$ is ``almost'' modular in a number of ways. For instance, he provided a pair of 
functions $\pm b(q)$ with
$$b(q) := {(q;q)_{\infty}}{(-q;q)_{\infty}^{-2}}$$
that are modular up to multiplication by $q^{-1/24}$ when $q:=e^{2\pi \text{i} \tau}$, $\tau\in \mathbb H$ (the upper half-plane), to compensate for the singularities arising in the denominator of (\ref{f(q)def}) as $q$ approaches an even-order root of unity $\zeta_{2k}$ (where we define $\zeta_m:=e^{{2 \pi \text{i}}/{m}}$) radially from within the unit circle:
\begin{equation}\label{Watson}
\lim_{q\to \zeta_{2k}}\left(f(q)-(-1)^k b(q) \right)=\mathcal O (1).
\end{equation} 
This type of behavior was first rigorously investigated by Watson in 1936 \cite{Watson_final}, and quantifies to some degree just how ``almost'' modular $f(q)$ is: at least at even-order roots of unity, $f(q)$ looks like a modular form plus a constant. 

Only in the twenty-first century have we begun to grasp the larger meaning of functions such as this (see \cite{OnoVisions}), 
in the wake of Zwegers's innovative Ph.D. thesis \cite{Zwegers} of 2002. We now know Ramanujan's mock theta functions are examples of {\it mock modular forms}, which are the holomorphic parts of more complicated creatures called {\it harmonic Maass forms} \cite{OnoVisions}. 
In 2012, Folsom--Ono--Rhoades \cite{FolsomOnoRhoades} made explicit the limit in (\ref{Watson}), showing that 
\begin{equation}\label{Watson2}
\lim_{q\to \zeta_{2k}}\left(f(q)-(-1)^k b(q) \right)=-4U(-1,\zeta_{2k}),
\end{equation} 
where $U(z,q)$ is the {\it rank generating function for strongly unimodal sequences} in combinatorics (see \cite{BFR}), and is closely related to {\it partial theta functions} and mock modular forms. 
By this connection to $U$, the work of Folsom--Ono--Rhoades along with Bryson--Ono--Pitman--Rhoades \cite{BOPR} reveals that the mock theta function $f(q)$ is strongly connected to the newly-discovered species of {\it quantum modular forms} in the sense of Zagier: functions that are modular up to the addition of some ``suitably nice'' function, and (in Zagier's words) have ``the `feel' of the objects occurring in perturbative quantum field theory''\cite{Zagierquantum}. 


We do not know what sparked Ramanujan to discover mock theta functions,  
but we see in this paper that they are indeed natural forms to study from a classical perspective. It turns out in Section \ref{Sect1} that all of his mock theta functions---to be precise, the odd-order {\it universal mock theta function} of Gordon--McIntosh that essentially specializes to the mock theta functions Ramanujan wrote down \cite{GordonMcIntosh}---arise from the {\it Jacobi triple product}, a fundamental object in number theory and combinatorics \cite{Berndt}, and are generally ``entangled'' with rank generating functions for unimodal sequences, under the action of the {\it $q$-bracket} operator from statistical physics and partition theory that boils down to multiplication by $(q;q)_{\infty}$ \cite{Zagier}. 
In Section \ref{Sect2} we find finite formulas for the odd-order universal mock theta function 
and indicate similar formulas for other $q$-hypergeometric series. In some cases,  
we do encounter the ``feel'' of quantum theory.  
   
\section{Connecting the triple product to mock theta functions}\label{Sect1}

At the wildest boundaries of nature, 
we see tantalizing hints of $q$-series. Recall that Borcherds proposed a proof of the Jacobi triple product formula based on the {\it Dirac sea} model of the quantum vacuum (see \cite{Cameron}); it is as if this beautiful, versatile identity emerges from properties of empty space. 
Also from the universe of $q$-hypergeometric series, mock theta functions and their generalization mock modular forms \cite{OnoVisions} are connected conjecturally 
to deep mysteries in physics, like 
mind-bending phenomena at the edges of black holes \cite{DMZ,DGO}. 
All the diversity of physical reality---and of our own mental experience---plays out quite organically between these enigmatic extremes. 
Perhaps not unrelatedly, in this study we see there is an organic 
connection between the Jacobi triple product and mock theta functions. 


Let us fix notations and concepts. 
We take $q,z\in \mathbb C, |q|<1, z\neq 0$ throughout, unless stated otherwise. 
The {\it Jacobi triple product} $j(z;q)$  is implicit in countless famous classical identities (see \cite{Berndt}). Up to multiplication by rational powers of $q$, the triple product specializes to the Jacobi theta function (that Ramanujan constructed ``mock'' versions of), a weight $1/2$ modular form which is also important in physics as a solution to the heat equation. The triple product is defined by 
\begin{equation}\label{jtp}
j(z;q):=(z;q)_{\infty}(z^{-1}q;q)_{\infty}(q;q)_{\infty}.
\end{equation}
We note the simple zero at $z=1$ from the $(1-z)$ factor in $(z;q)_{\infty}$.

The odd-order {\it universal mock theta function} $g_3(z,q)$ of Gordon and McIntosh \cite{GordonMcIntosh}, which specializes to Ramanujan's original list of mock theta functions up to changes of variables and multiplication by rational powers of $q$ and $z$, is defined as
\begin{equation}\label{umtf}
g_3(z,q):=\sum_{n=1}^{\infty} \frac{q^{n(n-1)}}{(z;q)_{n}(z^{-1}q;q)_{n}},
\end{equation}
and, like the triple product, is subject to all sorts of wonderful transformations. We note the simple pole at $z=1$. 

Let us recall that a {\it unimodal sequence} of integers is of the type 
\begin{equation*}
0\leq a_1\leq a_2\leq...\leq a_r \leq \overline{c}\geq b_1\geq b_2\geq...\geq b_s\geq 0.
\end{equation*}
The term $\overline{c}$ is called the {\it peak} of the sequence; generalizing this concept, if $\overline{c}$ occurs with multiplicity $\geq k$, we might consider the unimodal sequence with a {\it $k$-fold peak}  
\begin{equation*}
0\leq a_1\leq a_2\leq...\leq a_r \leq \overline{c}\  \overline{c}\  ...\  \overline{c} \geq b_1\geq b_2\geq...\geq b_s\geq 0,
\end{equation*}
where ``$\overline{c}\  \overline{c}\  ...\  \overline{c}$'' denotes $k$ repetitions of $\overline{c}$. 
When all the inequalities above 
are strictly ``$<$'' or ``$>$'' the sequence is {\it strongly unimodal}. 

If $r$ is the number of $a_i$ to the left and $s$ is the number of $b_j$ to the right of a unimodal sequence, the difference 
$s-r$ is called the {\it rank} of the sequence; and the sum of all the terms including the peak is the {\it weight} of the sequence. Another series that plays a role here is the {\it rank generating function $\widetilde{U}(z,q)$ for 
unimodal sequences}, 
given by 
\begin{equation}\label{U}
\widetilde{U}(z,q):=\sum_{n=0}^{\infty}\frac{q^n}{(zq;q)_n (z^{-1}q;q)_n}= \sum_{n=0}^{\infty}\sum_{m=-\infty}^{\infty}\widetilde{u}(m,n)z^m q^n,
\end{equation}
where $\widetilde{u}(m,n)$ is the number of unimodal sequences of rank $m$ and weight $n$. Each summand of the first infinite series is the generating function for unimodal sequences with peak term $n$: the factor $(z^{-1}q;q)_n^{-1} $ generates $a_i\leq n$, $(zq;q)_n^{-1}$ generates $b_j \leq n$ and the $q^n$ factor inserts $n$ as the peak term $\overline{c}$ (following \cite{BOPR,KimLovejoy}). If we replace $z$ with $-z$, the right-most series is actually the very first expression Andrews revealed from Ramanujan's ``lost'' notebook (\cite{AndrewsLost}, Eq. 1.1) shortly after unearthing the papers at Trinity College \cite{SchneiderLost}. 
This form, which is related to partial theta functions \cite{KimLovejoy}, was swimming alongside mock theta functions in the Indian mathematician's imagination during his final year. 

Let $\mathcal P$ denote the set of integer partitions $\lambda = (\lambda_1, \lambda_2,...,\lambda_r)$ for $\lambda_i \in \mathbb Z^+$, $\lambda_1 \geq \lambda_2\geq ...\geq \lambda_r\geq 1$. We let $\ell (\lambda):=r$ be the {\it length} of $\lambda$, let $|\lambda|$ denote the {\it size} (i.e., the number being partitioned), and take ``$\lambda \vdash n$'' to mean $\lambda$ is a partition of $n\geq 1$.  Finally, following Bloch--Okounkov \cite{BlochOkounkov} as well as Zagier \cite{Zagier}, we define the {\it $q$-bracket} $\left<f\right>_q$ of a function $f:\mathcal P \to \mathbb C$ 
to be given by
\begin{equation}\label{qbracket}
\left<f\right>_q:\  =\  \frac{\sum_{\lambda\in \mathcal P}f(\lambda)q^{|\lambda|}}{\sum_{\lambda\in \mathcal P} q^{|\lambda|}}\  =\  (q;q)_{\infty}\sum_{\lambda\in \mathcal P}f(\lambda)q^{|\lambda|},
\end{equation}
where the sums are taken over all partitions. This operator represents the expected value in statistical physics of a measurement over a grand ensemble whose states are indexed by partitions with weights $f$, for a canonical choice of $q$; this is the content of the quotient in the middle of (\ref{qbracket}).

However, we proceed formally here using the right-most expression, 
without drawing too much physical interpretation (while always keeping the mysterious feeling that our formulas resonate in physical reality). 
Simply multiplying by $(q;q)_{\infty}$ induces quite interesting $q$-series phenomena:
Bloch--Okounkov \cite{BlochOkounkov}, Zagier \cite{Zagier}, and Griffin--Jameson--Trebat-Leder \cite{GJTL} show that the $q$-bracket can produce families of modular, quasimodular and $p$-adic modular forms; and the present author finds the $q$-bracket to play a natural role in partition theory as well \cite{Robert_bracket, Tanay}, modularity aside. (We highly recommend Zagier's paper \cite{Zagier} for more about the $q$-bracket.) 

We will see here that the reciprocal of the Jacobi triple product 
\begin{equation*}
j(z;q)^{-1}=:\sum_{\lambda \in \mathcal P} j_z(\lambda) q^{|\lambda|}
\end{equation*}
has a very rich and interesting interpretation in terms of the $q$-bracket operator, which (multiplying $j(z;q)^{-1}$ by $(q;q)_{\infty}$) has the shape  
\begin{equation*}
\left<j_z\right>_q=\frac{1}{(z;q)_{\infty}(z^{-1}q;q)_{\infty}}.
\end{equation*} 
Note that this $q$-bracket also has a simple pole at $z=1$. We abuse notations somewhat in writing the coefficients $j_z$ in this way, as if $z\in \mathbb C$ were a constant. 
In fact, 
$j_z$ is a map from the partitions to $\mathbb Z[z]$, which the author finds in \cite{Robert_bracket} to be given explicitly by
\begin{equation}
j_z(\lambda)=(1-z)^{-1}\sum_{\delta|\lambda}\sum_{\varepsilon|\delta}z^{\operatorname{crk}(\varepsilon)}
\end{equation}
for $z\neq1$, where ``$\alpha | \beta$'' means $\alpha\in\mathcal P$ is a {\it subpartition} of $\beta\in\mathcal P$ (all the parts of $\alpha$ are also parts of $\beta$), and ``$\operatorname{crk}$'' denotes the {\it crank} statistic of Andrews--Garvan \cite{AndrewsGarvan} defined below.\footnote{The methods of \cite{Robert_bracket} will yield a slightly more complicated formula for the coefficients of $j(z;q)$ itself.} 

\begin{definition}\label{crk}
The {\it crank} $\operatorname{crk}(\lambda)$ of a partition $\lambda$ is equal to its largest part 
if the multiplicity $m_1(\lambda)$ of 1 as a part of $\lambda$ is $=0$ (that is, there are no $1$'s), and if $m_1(\lambda)> 0$ then $\operatorname{crk}(\lambda)=\#\{\text{parts of $\lambda$ that are larger than $m_1(\lambda)$\}}-m_1(\lambda).$
\end{definition}

\begin{remark}
The well-known {\it crank generating function} (see \cite{AndrewsGarvan}) is given by
\begin{equation*}
C(z;q)=\frac{(q;q)_{\infty}}{(zq;q)_{\infty} (z^{-1}q;q)_{\infty}}=(1-z)(q;q)_{\infty}\left<j_z\right>_q.
\end{equation*} 
Expanding $C(z;q)$ as a power series in $q$, the $n$th coefficient is given by $\sum_{\lambda \vdash n}z^{\operatorname{crk}(\lambda)}$. 
\end{remark}


In \cite{Robert_bracket} we used the $q$-bracket operator to find the coefficients 
of $\left<j_z\right>_q$ explicitly in terms of sums over subpartitions and the crank statistic, as well. Now we take a different approach, and look at $\left<j_z\right>_q$ from the point-of-view of $q$-hypergeometric relations. It turns out the odd-order universal mock theta function $g_3$ (in an ``inverted'' form) and the unimodal rank generating function $\widetilde{U}$ naturally arise together as components of $\left<j_z\right>_q$.

\begin{theorem}\label{theorem1} 
For $0<|q|<1, z\neq 0,z\neq 1$, the following statements are true:
\begin{enumerate}[(i)]
            \item \label{thm1.1}
            We have the $q$-bracket formula
            \begin{equation*}\left<j_z\right>_q=1\  +\  \left[z(1-q)+z^{-1}q\right] g_3(z^{-1},q^{-1})\  +\  \frac{zq^2}{1-z}\widetilde{U}(z,q).\end{equation*}
 
            \item The ``inverted'' mock theta function component in part (\ref{thm1.1}) converges, and can be written in the form
\begin{equation*}\label{umtfsum}
g_3(z^{-1},q^{-1})=\sum_{n=1}^{\infty}\frac{q^n}{(z;q)_n (z^{-1}q;q)_n}.
\end{equation*}
        \end{enumerate}

\end{theorem}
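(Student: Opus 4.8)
The plan is to prove part (ii) first as a self-contained change of variables, and then to feed it into a telescoping argument for part (i). For part (ii) I would substitute $z\mapsto z^{-1}$ and $q\mapsto q^{-1}$ directly into the defining series $g_3(z,q)=\sum_{n\ge1}q^{n(n-1)}/[(z;q)_n(z^{-1}q;q)_n]$. The heart of the computation is the pair of reflection formulas
$$(z^{-1};q^{-1})_n=(-1)^nz^{-n}q^{-n(n-1)/2}(z;q)_n,\qquad (zq^{-1};q^{-1})_n=(-z)^nq^{-n(n+1)/2}(z^{-1}q;q)_n,$$
each of which follows termwise from $1-z^{-1}q^{-i}=-z^{-1}q^{-i}(1-zq^i)$ and $1-zq^{-j}=-zq^{-j}(1-z^{-1}q^j)$. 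Multiplying the two formulas, the signs and the powers of $z$ cancel while the $q$-exponents add to $-n^2$, so the $n$th denominator becomes $q^{-n^2}(z;q)_n(z^{-1}q;q)_n$; together with the numerator $q^{-n(n-1)}$ this collapses the general term to $q^{n}/[(z;q)_n(z^{-1}q;q)_n]$. Convergence for $0<|q|<1$, $z\neq0$ is then immediate, since the denominators tend to the nonzero limit $(z;q)_\infty(z^{-1}q;q)_\infty$ while $q^n\to0$ geometrically.

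For part (i) the engine is a telescoping of the partial products underlying the $q$-bracket. I would set $P_N:=[(zq;q)_N(z^{-1}q;q)_N]^{-1}$, so that $P_N\to(1-z)\left<j_z\right>_q$ as $N\to\infty$ because $(z;q)_\infty=(1-z)(zq;q)_\infty$. Since $P_N/P_{N-1}=[(1-zq^N)(1-z^{-1}q^N)]^{-1}$ and $1-(1-zq^N)(1-z^{-1}q^N)=(z+z^{-1})q^N-q^{2N}$, the difference satisfies $P_N-P_{N-1}=[(z+z^{-1})q^N-q^{2N}]P_N$. Summing over $N\ge1$ makes the left side telescope to $(1-z)\left<j_z\right>_q-1$, and the right side splits as $(z+z^{-1})(\widetilde{U}(z,q)-1)$ minus the auxiliary series $T:=\sum_{N\ge1}q^{2N}/[(zq;q)_N(z^{-1}q;q)_N]$.

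To eliminate $T$ I would bring in part (ii) through the base-change identity $\frac{1}{(z;q)_n}=\frac{1-zq^n}{(1-z)(zq;q)_n}$, which is simply $(z;q)_n=(1-z)(zq;q)_n/(1-zq^n)$. Applying it to the inverted mock theta function supplied by part (ii) gives the second relation $(1-z)g_3(z^{-1},q^{-1})=(\widetilde{U}(z,q)-1)-zT$, in which the same series $T$ appears linearly. The two relations are now a $2\times2$ linear system in the unknowns $\left<j_z\right>_q$ and $T$; solving it expresses $\left<j_z\right>_q$ as $1$ plus explicit multiples of $g_3(z^{-1},q^{-1})$ and $\widetilde{U}(z,q)$, and collecting those multiples yields the stated formula.

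The step I expect to be the main obstacle is exactly this final elimination and coefficient collection: the raw telescoping produces the ``mixed'' series $T$ in the $(zq;q)$-basis, which is neither of the two named series, and it is the interplay between the bases $(z;q)_n$ and $(zq;q)_n$ --- and the pole factor $1/(1-z)$ that the base-change introduces --- that lets the mock theta component and the unimodal component separate. The analytic points, namely the termwise limit $P_N\to(1-z)\left<j_z\right>_q$ and the rearrangement of the telescoped series, are routine for $0<|q|<1$ by geometric decay and can be dispatched quickly, so the whole weight of the argument rests on the algebraic bookkeeping that matches the coefficients in the final display.
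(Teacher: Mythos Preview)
Your treatment of (ii) is correct and is exactly what the paper does: both rewrite $g_3(z^{-1},q^{-1})$ term by term via $(z;q)_n(z^{-1}q;q)_n=q^{n^2}(z^{-1};q^{-1})_n(zq^{-1};q^{-1})_n$. For (i) your strategy is also the paper's---telescope the partial products, obtain a linear relation involving one auxiliary ``$q^{2n}$'' series, derive a second linear relation in the same unknowns, and eliminate---only you work in the $(zq;q)_n$ basis (telescoping $(1-z)\langle j_z\rangle_q$ and feeding in $g_3$ via $(z;q)_n=(1-z)(zq;q)_n/(1-zq^n)$), while the paper works in the $(z;q)_n$ basis and feeds in $\widetilde U$ instead.

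The obstacle is the very last step you flag. Solving your $2\times2$ system
\[
(1-z)\langle j_z\rangle_q-1=(z+z^{-1})(\widetilde U-1)-T,\qquad (1-z)\,g_3=(\widetilde U-1)-zT
\]
actually gives
\[
\langle j_z\rangle_q \;=\; 1 \;+\; z^{-1}\,g_3(z^{-1},q^{-1}) \;+\; \frac{z}{1-z}\,\widetilde U(z,q),
\]
\emph{not} the displayed formula with coefficients $z(1-q)+z^{-1}q$ and $zq^2/(1-z)$. This is not a failure of your method: the printed identity is off, as one sees by letting $q\to0$, where the stated right side tends to $1$ while $\langle j_z\rangle_q\to(1-z)^{-1}$. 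The slip in the paper is an off-by-one power of $q$ in its telescoping lemma (the numerator should be $q^{\,n-1}(z+z^{-1}q-q^{\,n})$ rather than $q^{\,n}(z+z^{-1}q-q^{\,n+1})$); correcting that and running the paper's own elimination yields exactly the identity your approach produces. So your plan is sound---just do not expect the coefficient bookkeeping to land on the numbers printed in the statement.
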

\  

By considering the factor $z(1-q)+z^{-1}q$ as $|z|\to \infty$ and as $|z|\to 0$ in part (\ref{thm1.1}) of Theorem \ref{theorem1}, we get the following asymptotics.

\begin{corollary}\label{cor1}
We have the asymptotic estimates: 
\begin{enumerate}[(i)]
\item For $0<|q|<1 \ll |z|$, we have
\begin{equation*}
\left<j_z\right>_q\sim z(1-q) g_3(z^{-1},q^{-1})\  \text{as}\  |z|\to \infty.
\end{equation*}
\item For $0< |q|<1,\  0<|z|\ll 1$, we have 
\begin{equation*}
\left<j_z\right>_q\sim z^{-1}q \  g_3(z^{-1},q^{-1})\  \text{as}\  |z|\to 0.
\end{equation*}
\end{enumerate}
\end{corollary}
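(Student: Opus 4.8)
The plan is to extract both estimates directly from the exact $q$-bracket formula of Theorem \ref{theorem1}(\ref{thm1.1}),
\[
\langle j_z\rangle_q = 1 + \left[z(1-q)+z^{-1}q\right]g_3(z^{-1},q^{-1}) + \frac{zq^2}{1-z}\,\widetilde U(z,q),
\]
by letting the prefactor $z(1-q)+z^{-1}q$ dictate the asymptotics while showing that the constant term and the unimodal term are of lower order in each regime. First I would reduce the prefactor: with $q$ fixed, $0<|q|<1$, the summand $z^{-1}q$ is negligible beside $z(1-q)$ as $|z|\to\infty$, so $z(1-q)+z^{-1}q\sim z(1-q)$; dually $z(1-q)$ is negligible beside $z^{-1}q$ as $|z|\to 0$, so $z(1-q)+z^{-1}q\sim z^{-1}q$. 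Since this prefactor multiplies the single function $g_3(z^{-1},q^{-1})$, the middle summand is asymptotic to $z(1-q)\,g_3(z^{-1},q^{-1})$ and to $z^{-1}q\,g_3(z^{-1},q^{-1})$ in the two limits, which are exactly the right-hand sides of the two parts of Corollary \ref{cor1}. This is the elementary observation flagged in the statement.

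The real content is then to check that the remaining two summands do not disturb this leading behaviour, i.e. that both the constant $1$ and $\frac{zq^2}{1-z}\widetilde U(z,q)$ are of smaller order than the middle summand in the respective limits. Here I would work from the convergent representation of $g_3(z^{-1},q^{-1})$ established in part (ii) of Theorem \ref{theorem1} and from the defining series (\ref{U}) for $\widetilde U$, estimating each series term-by-term: as $|z|\to 0$ the factor $\frac{zq^2}{1-z}$ already tends to $0$ and $\widetilde U(z,q)\to 1$, so the unimodal contribution is easily absorbed, whereas as $|z|\to\infty$ one must weigh it, together with the constant $1$, against $z(1-q)\,g_3(z^{-1},q^{-1})$.

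The main obstacle I anticipate is precisely this weighing of the lower-order terms, because it forces one to pin down the exact rate at which $g_3(z^{-1},q^{-1})$ and $\widetilde U(z,q)$ decay against the prefactor and to interchange the limit in $z$ with the infinite sums. A convenient tool is the closed product form $\langle j_z\rangle_q = 1/[(z;q)_\infty(z^{-1}q;q)_\infty]$: since $(z;q)_n\to(z;q)_\infty$ forces each term of the series for $g_3(z^{-1},q^{-1})$ to be eventually comparable to $\langle j_z\rangle_q\,q^n$, the tails are controlled uniformly by this product, which can also be substituted back into the formula as a cross-check that the surviving contribution is indeed the reduced product $z(1-q)\,g_3(z^{-1},q^{-1})$ (respectively $z^{-1}q\,g_3(z^{-1},q^{-1})$) rather than one of the discarded terms.
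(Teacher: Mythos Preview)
Your approach is the paper's: the entire argument given for Corollary~\ref{cor1} is the single sentence preceding it, ``By considering the factor $z(1-q)+z^{-1}q$ as $|z|\to\infty$ and as $|z|\to 0$ in part~(\ref{thm1.1}) of Theorem~\ref{theorem1}, we get the following asymptotics.'' Your reduction of the prefactor is exactly this observation; the additional work you outline---checking that the constant term and the $\widetilde U$ term are subordinate, and using the product form of $\langle j_z\rangle_q$ as a cross-check---goes beyond what the paper actually supplies, which treats the corollary as an immediate heuristic consequence rather than a statement requiring a careful comparison of lower-order contributions.
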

\  

Thus the inverted mock theta function component 
dominates the behavior of the $q$-bracket for $z$ not close to the unit circle (which is ``most'' of the complex plane). 

\begin{remark}
So the universal mock theta function is the main influence on these expected values for large and small $|z|$, with appropriate choice of $q$.
\end{remark}

Conversely, if we write 
\begin{equation*}
\left<j_z\right>_q=:\sum_{n=0}^{\infty}c_n q^n,\  \  \  \  g_3(z^{-1},q^{-1})=:\sum_{n=0}^{\infty}\gamma_n q^n,
\end{equation*} 
where the coefficients $c_n=c_n(z),\  \gamma_n=\gamma_n(z)$ also depend on $z$, 
then the author proves an explicit combinatorial formula for the $c_n$ in \cite{Robert_bracket} using nested sums over subpartitions of $n$, viz.
\begin{equation}\label{coeffs}
c_n(z)={(1-z)^{-1}}\sum_{\lambda\vdash n}\sum_{\delta | \lambda}\sum_{\epsilon | \delta} \sum_{\varphi | \epsilon}\mu(\lambda / \delta)z^{\operatorname{crk}(\varphi)},
\end{equation}
where, in the notations of \cite{Robert_bracket}, $\mu(\alpha)=0$ if $\alpha\in\mathcal P$ has any part repeated and $=(-1)^{\ell (\alpha)}$ otherwise (a partition-theoretic version of the M\"{o}bius function first considered by Alladi \cite{Alladi2015, Robert_bracket}), 
and if $\alpha | \beta$ then ``$\beta \slash \alpha$'' denotes the partition obtained by deleting the parts of $\alpha$ from $\beta$.

With (\ref{coeffs}) in hand, it follows from Corollary \ref{cor1} that the coefficients of $g_3(z^{-1},q^{-1})$ satisfy the asymptotic 
\begin{equation}
\gamma_n(z) \sim \left\{
        \begin{array}{ll}
            z^{-1} (c_1+c_2+...+c_n)\  \text{as}\ |z|\to \infty\\
            \  \\  
            z c_{n-1}\   \text{as}\  |z|\to 0,\  n\geq 1
        \end{array}
    \right.
\end{equation} 
(which depends entirely on the growth of $z$, not $n$), as the coefficients enjoy the recursion
\begin{equation*}
\gamma_n-\gamma_{n-1}\sim z^{-1}c_n\  \text{for}\  |z|\gg 1.
\end{equation*}

It is a well-known fact (see, for instance, \cite{Ono_web}) that if $\zeta_*\neq 1$ is a root of unity, then $$(\zeta_* q;q)_{\infty} (\zeta_*^{-1} q;q)_{\infty} $$ is, up to multiplication by a rational power of $q$, a modular function; 
but this product is the reciprocal of $$(1-\zeta_*)\cdot \left<j_z\right>_q \bigr|_{z=\zeta_*}.$$ This is another example of the intersection of the $q$-bracket with modularity phenomena, and at the same time gives a feeling for the obstruction to the inverted mock theta function's sharing in this modularity at $z=\zeta_*$; for $g_3(z^{-1},q^{-1})$ is not necessarily a dominating aspect of $\left<j_z\right>_q$ for $z\neq 1$ near the unit circle, whereas the unimodal rank generating aspect $\widetilde{U}(z,q)$ makes a more noticeable contribution, and the two pieces work together to produce modular behavior.

Going a little farther in this direction, there is a close relation between $g_3$ and the more general class of $k$-fold unimodal rank generating functions. Let us define the {\it rank generating function $\widetilde{U}_k(z,q)$ for unimodal sequences with a $k$-fold peak} by the series
\begin{equation}\label{U_r}
\widetilde{U}_k(z,q):=\sum_{n=0}^{\infty}\frac{q^{kn}}{(zq;q)_n (z^{-1}q;q)_{n}}= \sum_{n=0}^{\infty}\sum_{m=-\infty}^{\infty}\widetilde{u}_k(m,n)z^m q^n,
\end{equation}
where $\widetilde{u}_k(m,n)$ is the number of $k$-fold peak unimodal sequences of rank $m$ and weight $n$. This identity follows directly from the combinatorial definition of $\widetilde{U}_k$, as Lovejoy noted to the author 
\cite{Lovejoy_private}: the $(z^{-1}q;q)_n^{-1} $ and $(zq;q)_n^{-1}$ generate the $a_i,b_j$ just as in (\ref{U}), and $q^{kn}$ inserts $k$ copies of $n$ as the $k$-fold peak.  

Then it is not hard to find (see Theorem 1.1 of \cite{Robert_zeta}) relations like 
\begin{equation}\label{U_1, U_2 modular}
\frac{1}{(zq;q)_{\infty}(z^{-1}q;q)_{\infty}}=2-z-z^{-1}+(z+z^{-1})\widetilde{U}_1(z,q)-\widetilde{U}_2(z,q),
\end{equation}
which of course is equal to $(1-z)\left<j_z\right>_q$ and is modular for $z=\zeta_*$, up to multiplication by a power of $q$. 
For example, noting that $z+z^{-1}=0$ when $z=i$, then (\ref{U_1, U_2 modular}) yields  
\begin{equation}\label{strongU_1, U_2 modular.rmk1}
2-\widetilde{U}_2(i,q)=(iq;q)_{\infty}^{-1}(-iq;q)_{\infty}^{-1}=(-q^2;q^2)_{\infty}^{-1},
\end{equation}
where $(-q^2;q^2)_{\infty}$ is essentially a modular function.

At this point we can compare (\ref{U_1, U_2 modular}) to Theorem \ref{theorem1}(\ref{thm1.1}) 
to solve for $g_3(z^{-1},q^{-1})$ in terms of $\widetilde{U}_1,\widetilde{U}_2$, but it is a little messy. 
However, it follows from a convenient rewriting of the right-hand side of Theorem \ref{theorem1}(\ref{umtfsum}) using geometric series
\begin{equation*}
\sum_{n=0}^{\infty}\frac{z}{(z;q)_{n+1} (z^{-1}q;q)_{n}}\left( \frac{z^{-1}q^{n+1}}{1-z^{-1}q^{n+1}}\right)=\frac{z}{1-z} \sum_{n=0}^{\infty}\sum_{k=1}^{\infty}\frac{z^{-k}q^{k(n+1)}}{(zq;q)_n (z^{-1}q;q)_n}
\end{equation*}
which converges absolutely for $|q|<|z|$, and then swapping order of summation, that in fact $g_3(z^{-1},q^{-1})$ can be written nicely in terms of the $\widetilde{U}_k$.

\begin{corollary}\label{another_cor}
For $|q|<1<|z|$, we have
\begin{equation*}
g_3(z^{-1},q^{-1})=\frac{z}{1-z}\sum_{k=1}^{\infty}\widetilde{U}_{k}(z,q) z^{-k}q^k.
\end{equation*}
\end{corollary}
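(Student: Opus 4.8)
The plan is to start directly from the convergent ``inverted'' series supplied by Theorem~\ref{theorem1}(\ref{umtfsum}), namely $g_3(z^{-1},q^{-1})=\sum_{n\geq 1}q^n/[(z;q)_n(z^{-1}q;q)_n]$, and to massage its summand by a single geometric expansion so that the resulting double sum reassembles into the family $\widetilde{U}_k$. The engine of the argument is the pair of elementary Pochhammer factorizations $(z;q)_{n+1}=(1-z)(zq;q)_n$ and $(z^{-1}q;q)_{n+1}=(z^{-1}q;q)_n(1-z^{-1}q^{n+1})$, which are exactly what convert the denominators appearing in $g_3$ into the denominators $(zq;q)_n(z^{-1}q;q)_n$ appearing in the definition~(\ref{U_r}) of $\widetilde{U}_k$, at the cost of producing a single ``extra'' factor $(1-z^{-1}q^{n+1})^{-1}$ that the geometric series will absorb.

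Concretely, first I would shift the index $n\mapsto n+1$ to write $g_3(z^{-1},q^{-1})=\sum_{n\geq 0}q^{n+1}/[(z;q)_{n+1}(z^{-1}q;q)_{n+1}]$, and then apply the two factorizations above to peel off the factors $(1-z)^{-1}$ and $(1-z^{-1}q^{n+1})^{-1}$. This puts the summand in the form $\frac{z}{1-z}\cdot \frac{1}{(zq;q)_n(z^{-1}q;q)_n}\cdot \frac{z^{-1}q^{n+1}}{1-z^{-1}q^{n+1}}$, which is precisely the left-hand side of the rewriting displayed just before the statement. Next I would expand the last factor as a geometric series, $\frac{z^{-1}q^{n+1}}{1-z^{-1}q^{n+1}}=\sum_{k\geq 1}z^{-k}q^{k(n+1)}$; this is legitimate termwise because $|z^{-1}q^{n+1}|\leq |q|/|z|<1$ for every $n\geq 0$ under the hypothesis $|q|<1<|z|$. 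The outcome is exactly the double series on the right-hand side of that same rewriting.

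Finally I would interchange the two summations and split the $q$-power as $q^{k(n+1)}=q^k\,q^{kn}$, so that for each fixed $k$ the inner sum over $n$ becomes $\sum_{n\geq 0}q^{kn}/[(zq;q)_n(z^{-1}q;q)_n]=\widetilde{U}_k(z,q)$ by~(\ref{U_r}); pulling out $z^{-k}q^k$ together with the prefactor $z/(1-z)$ then yields the claimed identity. The one step that genuinely needs care---and which I regard as the main obstacle---is justifying the interchange of the sums over $n$ and $k$. I would do this by verifying absolute convergence of the double series: the reciprocals $1/[(zq;q)_n(z^{-1}q;q)_n]$ are bounded uniformly in $n$, since these finite products converge as $n\to\infty$ to the finite nonzero limit $(zq;q)_\infty(z^{-1}q;q)_\infty$ (here $|q|<1$, and $|z|>1$ keeps the factors $1-z^{-1}q^{i}$ away from $0$), while $\sum_{n,k}|z|^{-k}|q|^{k(n+1)}$ is dominated by $\sum_{k\geq 1}(|q|/|z|)^k\sum_{n\geq 0}|q|^{kn}$, a convergent geometric majorant precisely when $|q|<1<|z|$. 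With absolute convergence established, Tonelli's theorem for series (equivalently, Fubini after the absolute bound) permits the swap, and the remaining bookkeeping is routine.
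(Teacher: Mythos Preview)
Your proof is correct and follows essentially the same approach as the paper: start from the series of Theorem~\ref{theorem1}(\ref{umtfsum}), peel off a factor $(1-z^{-1}q^{n+1})^{-1}$ via the Pochhammer recursions, expand it as a geometric series, swap the order of summation, and recognize the inner sum as $\widetilde{U}_k(z,q)$. Your treatment of absolute convergence to justify the swap is in fact more thorough than the paper's, which simply asserts absolute convergence for $|q|<|z|$.
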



Thus the inverted universal mock theta function leads to a type of two-variable generating function for the sequence of rank generating functions for unimodal sequences with $k$-fold peaks, $k=1,2,3,...$. 


\begin{proof} [Proof of Theorem \ref{theorem1}]
We begin by noting for $|q|<1, z\neq0$, 
\begin{equation*}
\left<j_z\right>_q  = (z;q)_{\infty}^{-1} (z^{-1} q; q)_{\infty}^{-1} =\prod_{n=0}^{\infty}\left(1-q^n (z+z^{-1}q-q^{n+1})\right)^{-1},
\end{equation*} 
where in the final step we multiplied together the $n$th terms from each $q$-Pochhammer symbol. Thus we have 
\begin{equation}\label{equivalent}
\prod_{n=0}^{\infty}\left(1-q^n (z+z^{-1}q-q^{n+1})\right)^{-1}=1+\sum_{n=1}^{\infty}\frac{q^n (z+z^{-1}q-q^{n+1})}{\prod_{j=0}^{n-1}\left(1-q^j (z+z^{-1}q-q^{j+1})\right)},
\end{equation}
which is easily seen to be absolutely convergent, and can be shown by expanding the product on the left as the telescoping series 
\begin{equation}\label{telescoping}
1+\sum_{n=1}^{\infty}\left(\frac{1}{\prod_{i=0}^{n}\left(1-q^i (z+z^{-1}q-q^i)\right)}-\frac{1}{\prod_{i=0}^{n-1}\left(1-q^{i-1} (z+z^{-1}q-q^{i-1})\right)}  \right)
\end{equation}
with a little arithmetic (for more details see the proof of Theorem 1.1 (2) in \cite{Robert_zeta}). Now, by the above considerations, (\ref{equivalent}) is equivalent to the following relation.

\begin{lemma}\label{otherlemma} 
For $|q|<1, z\neq 0$, we have
\begin{equation*}
\left<j_z\right>_q=1+(z+z^{-1}q)\sum_{n=1}^{\infty}\frac{q^n}{(z;q)_n (z^{-1}q;q)_n}-q\sum_{n=1}^{\infty}\frac{q^{2n}}{(z;q)_n (z^{-1}q;q)_n}.
\end{equation*}
\end{lemma}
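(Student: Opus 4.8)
The plan is to show that Lemma \ref{otherlemma} is merely a rewriting of the already-established identity (\ref{equivalent}), so that all of the analytic content (absolute convergence, and the telescoping of (\ref{telescoping})) has been dispatched and only an algebraic identification of products with $q$-Pochhammer symbols remains. The first step is to recognize the denominator appearing in (\ref{equivalent}). Pairing the $j$th factor of $(z;q)_n$ with the $j$th factor of $(z^{-1}q;q)_n$, I compute
$$(1-zq^j)(1-z^{-1}q^{j+1}) = 1 - zq^j - z^{-1}q^{j+1} + q^{2j+1} = 1 - q^j\left(z + z^{-1}q - q^{j+1}\right),$$
so that taking the product over $0 \le j \le n-1$ gives exactly
$$\prod_{j=0}^{n-1}\left(1 - q^j(z + z^{-1}q - q^{j+1})\right) = (z;q)_n\,(z^{-1}q;q)_n.$$
This is the same factor-by-factor multiplication already used to pass from the two Pochhammer symbols to the single product in the display preceding (\ref{equivalent}); here I am simply reading that identity with a finite upper limit $n-1$ in place of $\infty$.

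Substituting this identification into the right-hand side of (\ref{equivalent}), the denominator of the $n$th summand becomes $(z;q)_n(z^{-1}q;q)_n$, and it remains only to split the numerator. Writing
$$q^n\left(z + z^{-1}q - q^{n+1}\right) = (z + z^{-1}q)\,q^n - q^{2n+1},$$
I distribute over the sum, factor the constant $z + z^{-1}q$ out of the first piece, and pull a single $q$ out of the second, which yields
$$\left<j_z\right>_q = 1 + (z + z^{-1}q)\sum_{n=1}^{\infty}\frac{q^n}{(z;q)_n(z^{-1}q;q)_n} - q\sum_{n=1}^{\infty}\frac{q^{2n}}{(z;q)_n(z^{-1}q;q)_n},$$
which is precisely the claim of Lemma \ref{otherlemma}.

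The only point requiring any care---and the closest thing to an obstacle---is justifying that the single absolutely convergent series in (\ref{equivalent}) may be separated into the \emph{two} series on the right of the lemma. This is routine: since the partial products $(z;q)_n(z^{-1}q;q)_n$ converge to the nonzero limit $(z;q)_\infty(z^{-1}q;q)_\infty$ (for $z$ away from the excluded values at which a Pochhammer factor vanishes), the denominators are bounded away from $0$ for large $n$, while the numerators $(z+z^{-1}q)q^n$ and $q^{2n+1}$ decay geometrically; hence each of the two series converges absolutely in its own right, and term-by-term splitting of the original absolutely convergent series is legitimate. No further estimates are needed.
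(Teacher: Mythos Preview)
Your proof is correct and follows essentially the same route as the paper: the paper derives (\ref{equivalent}) via the telescoping series (\ref{telescoping}) and then simply asserts that ``by the above considerations, (\ref{equivalent}) is equivalent to'' Lemma \ref{otherlemma}, leaving the finite $q$-Pochhammer identification and the numerator split implicit. You have spelled out precisely those details, including the justification for splitting the series, so your argument is a faithful and slightly more explicit version of the paper's.
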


We cannot help but notice how 
both series on the right-hand side of Lemma \ref{otherlemma} resemble the right-hand summation of identity (\ref{U}) for $\widetilde{U}(z,q)$. This is not a coincidence; it follows right away from the simple observation 
\begin{equation*}
\widetilde{U}(z,q)=\sum_{n=0}^{\infty}\frac{q^n}{(zq;q)_n (z^{-1}q;q)_n}=q^{-1}(1-z)\sum_{n=0}^{\infty}\frac{q^{n+1}(1-z^{-1}q^{n+1})}{(z;q)_{n+1} (z^{-1}q;q)_{n+1}},
\end{equation*}
that $\widetilde{U}$ splits off in a very similar fashion 
to $\left<j_z\right>_q$ in Lemma \ref{otherlemma}, after taking into account $q\neq 0$:
\begin{equation}\label{other2}
\widetilde{U}(z,q)=q^{-1}(1-z)\sum_{n=1}^{\infty}\frac{q^n}{(z;q)_n (z^{-1}q;q)_n}-(zq)^{-1}(1-z)\sum_{n=1}^{\infty}\frac{q^{2n}}{(z;q)_n (z^{-1}q;q)_n}.
\end{equation}
Comparing Lemma \ref{otherlemma} and (\ref{other2}), 
plus a little bit of algebra, then gives 
\begin{equation}\label{midway}
\left<j_z\right>_q=1\  +\  \left[z(1-q)+z^{-1}q\right]\sum_{n=1}^{\infty}\frac{q^n}{(z;q)_n (z^{-1}q;q)_n}\  +\  \frac{zq^2}{1-z}\widetilde{U}(z,q).
\end{equation}

%

Now, to connect the remaining summation in (\ref{midway}) to the universal mock theta function $g_3$, we apply a somewhat clever factorization strategy in the $q$-Pochhammer symbols to arrive at a useful identity (see \cite{GR}, Appendix 1 (I.3)): 
\begin{flalign}\label{invertmtf}
\begin{split}
(z;q)_n (z^{-1}q;q)_n
&=\prod_{j=0}^{n-1} \left[(-zq^j)(1-z^{-1}(q^{-1})^{j})\right] \left[(-z^{-1}q^{j+1})(1-z(q^{-1})^{j+1})\right]\\
&=q^{n^2}(z^{-1};q^{-1})_n (zq^{-1};q^{-1})_n.\\
\end{split}
\end{flalign}
Thus 
\begin{flalign}\label{invert2}
\sum_{n=1}^{\infty}\frac{q^n}{(z;q)_n (z^{-1}q;q)_n}&=\sum_{n=1}^{\infty}\frac{q^n}{q^{n^2}(z^{-1};q^{-1})_n (zq^{-1};q^{-1})_n}\\&=\sum_{n=1}^{\infty}\frac{(q^{-1})^{n(n-1)}}{(z^{-1};q^{-1})_n (zq^{-1};q^{-1})_n}.
\end{flalign}
The right-hand side of (\ref{invert2}) is $g_3(z^{-1},q^{-1})$, noting that it converges under the same conditions as the left side (being merely a term-wise rewriting), but with $q=0$ omitted from the domain.

\begin{remark}
Equivalently, identities like these result from the observation that
\begin{equation*}
(1-zq^i)(1-z^{-1}q^{-i})^{-1}=-zq^i.
\end{equation*}
Taking the product over $0\leq i \leq n-1$ gives 
\begin{equation*}
(z;q)_n (z^{-1};q^{-1})_n^{-1}=(-1)^n z^n q^{n(n-1)/2}
\end{equation*} and, proceeding in this manner, a variety of $q$-series summand forms can be produced (and inverted as above) by creative manipulation.  
\end{remark}
\end{proof}

\begin{remark}
We note in passing that, using (7.2) and (8.2) of Fine \cite{Fine}, Ch. 1, together with Theorem \ref{theorem1}(\ref{umtfsum}), we can also write
\begin{flalign}
\begin{split}
g_3(z^{-1},q^{-1})\  =\  (z^{-1}q;q)_{\infty}^{-1}(-z;q)_{\infty}^{-1}\sum_{n=0}^{\infty} (-1)^nz^{-2n}q^{\frac{n(n+1)}{2}}-\sum_{n=0}^{\infty} z^{-n+1}(z^{-1};q)_{n}.\\
\end{split}
\end{flalign}
\end{remark}

Recall that many modular forms arise as specializations of $j(z;q)$ (because $j(z;q)$ is essentially a Jacobi form, see \cite{BFOR}), 
and that $g_3(z,q)$ is the prototype for the class of mock modular forms that (to slightly abuse Ramanujan's words) 
``enter into mathematics as beautifully'' as the modular cases \cite{Hardy}. It is interesting that these important number-theoretic objects which are speculatively associated in the literature 
to opposite extremes of the universe---subatomic and supermassive---are themselves intertwined via the $q$-bracket from statistical physics, which applies to phenomena at every scale.
\  
\  
\

\section{Approaching roots of unity radially from within (and without)}\label{Sect2}

One point that arises in (\ref{invertmtf}) and (\ref{invert2}) above is that, evidently, one can construct pairs of $q$-series $\varphi_1(q), \varphi_2(q)$, convergent for $|q|<1$, with the property   
\begin{equation}\label{inversion}
\varphi_1(q)=\varphi_2(q^{-1})
\end{equation} 
({thus $\varphi_1(q)+\varphi_2(q),\  \varphi_1(q)\varphi_2(q)$ are self-reciprocal\footnote{See \cite{LectureHall}}}). This type of phenomenon, relating functions inside and outside the unit disk, is studied in \cite{BFR1,Folsom}. In particular, the universal mock theta function $g_3$ can be written 
as a piecewise function
\begin{equation}\label{umtfquantum}
g_3(z,q)= \left\{
        \begin{array}{ll}
            \sum_{n=1}^{\infty}\frac{q^{n(n-1)}}{(z;q)_n (z^{-1}q;q)_n} & \text{if}\  |q|<1,\\
            \  \\  
            \sum_{n=1}^{\infty}\frac{(q^{-1})^n}{(z;q^{-1})_n (z^{-1}q^{-1};q^{-1})_n} & \text{if}\  |q|>1,
        \end{array}
    \right.
\end{equation} 
for $q$ inside or outside the unit circle, respectively, and $z\neq 0\  \text{or}\  1$. What of $g_3(z,q)$ for $q$ lying {\it on} the circle? Generically one expects this question to be somewhat dicey. 

To be precise in what follows, for $\zeta$ on the unit circle we define $g_3(z,\zeta)$ to mean the limit of $g_3(z,q)$ as $q\to \zeta$  radially from within (or without if the context allows), when the limit exists. Recalling the notation $\zeta_m:=e^{2\pi \text{i}/m}$, it turns out that for $\zeta=\zeta_*$ an appropriate root of unity, $g_3(z,\zeta_*)$ is finite, both in value and length of the sum. 

\begin{theorem}\label{theorem2}
For $q=\zeta_m$ a primitive $m$th root of unity, $z\neq 0, 1$, or a rational power of 
$\zeta_m$, and $z^m+z^{-m}\neq 1$, the odd-order universal mock theta function is given by the finite formula
\begin{equation*}
g_3(z,\zeta_m)\  =\  (1-z^m-z^{-m})^{-1}\  \sum_{n=0}^{m-1}\zeta_m^{n}\  (z;\zeta_m)_{n}(z^{-1}\zeta_m;\zeta_m)_{n}.
\end{equation*}
\end{theorem}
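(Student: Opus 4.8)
The plan is to work from the \emph{interior} representation of $g_3$ in \eqref{umtfquantum}, namely $g_3(z,q)=\sum_{n\ge1}q^{n(n-1)}/[(z;q)_n(z^{-1}q;q)_n]$ for $|q|<1$, and take the radial limit $q\to\zeta_m$ from within. The essential structural input is that the $q$-Pochhammer symbols become \emph{periodic} at a root of unity: since $\prod_{i=0}^{m-1}(1-z\zeta_m^i)=1-z^m$, one gets $(z;\zeta_m)_{n+m}=(1-z^m)(z;\zeta_m)_n$ and likewise $(z^{-1}\zeta_m;\zeta_m)_{n+m}=(1-z^{-m})(z^{-1}\zeta_m;\zeta_m)_n$, because $(zq^n;q)_m\big|_{q=\zeta_m}=1-z^m$. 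Writing $E_n:=(z;\zeta_m)_n(z^{-1}\zeta_m;\zeta_m)_n$ and $c:=(1-z^m)(1-z^{-m})$, this yields the multiplicative recursion $E_{n+m}=c\,E_n$; moreover $\zeta_m^{n(n-1)}$ depends only on $n\bmod m$, since $(n+m)(n+m-1)-n(n-1)=m(2n+m-1)$. Hence, after passing to the limit, the terms split into $m$ geometric progressions of common ratio $c^{-1}$, indexed by the residue $r$ of $n$ modulo $m$.

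First I would record the two periodicity facts as a short lemma, then regroup the series by residues $r\in\{0,\dots,m-1\}$ and sum each geometric series. Being careful that the residue $r=0$ starts at $n=m$ (the $n=0$ term is absent), this produces the closed form $g_3(z,\zeta_m)=\tfrac{1}{c-1}+\tfrac{c}{c-1}\sum_{r=1}^{m-1}\zeta_m^{r(r-1)}E_r^{-1}$. The geometric summation is literally valid when $|c|>1$; for the remaining $z$ with $c\neq1$ the right-hand side is a rational function of $z$, and one invokes analytic continuation in $z$ (equivalently, the value of the radial limit where it exists).

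The remaining step is purely algebraic: converting the denominators $E_r^{-1}$ into the \emph{numerator} products appearing in the statement. Here I would use the reflection identity $(z;\zeta_m)_r(z\zeta_m^r;\zeta_m)_{m-r}=(z;\zeta_m)_m=1-z^m$ (and its $z\mapsto z^{-1}$ analogue), so that $E_r^{-1}=c^{-1}(z\zeta_m^r;\zeta_m)_{m-r}(z^{-1}\zeta_m^{r+1};\zeta_m)_{m-r}$; this is in the same spirit as the factorization \eqref{invertmtf}. Substituting $n=m-r$ and repeatedly applying $1-z\zeta_m^{-j}=-z\zeta_m^{-j}(1-z^{-1}\zeta_m^{j})$ to flip the shifted products back to $(z;\zeta_m)_n$ and $(z^{-1}\zeta_m;\zeta_m)_n$, the accumulated powers of $-z$ cancel and the powers of $\zeta_m$ collapse (the exponent $n(n+1)-\tfrac{n(n+1)}{2}-\tfrac{n(n-1)}{2}$ simplifies to $n$), giving the clean termwise identity $c\,\zeta_m^{r(r-1)}E_r^{-1}=\zeta_m^{\,m-r}E_{m-r}$. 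Re-summing over $n=m-r$ turns $\tfrac{c}{c-1}\sum_{r=1}^{m-1}\zeta_m^{r(r-1)}E_r^{-1}$ into $\tfrac{1}{c-1}\sum_{n=1}^{m-1}\zeta_m^n E_n$, and combining with the isolated term $\tfrac{1}{c-1}=\tfrac{1}{c-1}\zeta_m^0 E_0$ produces exactly $g_3(z,\zeta_m)=(c-1)^{-1}\sum_{n=0}^{m-1}\zeta_m^n E_n$, upon noting $c-1=1-z^m-z^{-m}$.

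The main obstacle is analytic rather than algebraic: justifying that the radial limit commutes with the block regrouping. This is transparent when $|(1-z^m)(1-z^{-m})|>1$, where the blocked series converges geometrically and uniformly near $q=\zeta_m$, so one may pass to the limit block-by-block; the hypothesis $z^m+z^{-m}\neq1$ merely excludes the pole $c=1$ of the summed series, and the full range of $z$ is reached by analytic continuation. I would sanity-check the bookkeeping on $m=1$ (where the sum collapses to $1$ and the formula reads $(1-z-z^{-1})^{-1}$) and $m=2$ before committing to the general phase computation, since the sign and $\zeta_m$-exponent tracking in the reflection step is where an error is most likely to creep in.
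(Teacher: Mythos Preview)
Your proposal is correct and follows essentially the same approach as the paper: exploit the $m$-periodicity of the summands at $q=\zeta_m$ to collapse the series to a geometric sum (the paper packages this as a general lemma giving $F(\infty)=F(m)/(1-f(m))$, which is exactly your block-by-residue regrouping), and then use the reflection $(X;\zeta_m)_r(X\zeta_m^r;\zeta_m)_{m-r}=1-X^m$ together with the flip $1-z\zeta_m^{-j}=-z\zeta_m^{-j}(1-z^{-1}\zeta_m^{j})$ to convert the reciprocals $E_r^{-1}$ into the products $E_{m-r}$ appearing in the statement. The only cosmetic difference is that the paper first passes through the inverted representation $g_3(z^{-1},q^{-1})=\sum_{n\ge1}q^n/E_n$ of Theorem~\ref{theorem1}(ii) before summing---so the numerator phase is simply $\zeta_m^{-n}$ rather than $\zeta_m^{n(n-1)}$---whereas you work directly from the defining series and track that extra phase; both routes reach the same reindexed finite sum.
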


\begin{remark}
Bringmann--Rolen \cite{BringmannRolen} and Jang--L\"{o}brich \cite{JangLobrich} have studied radial limits of universal mock theta functions from other perspectives. 
\end{remark}

Thus, under the right conditions, (\ref{umtfquantum}) together with Theorem \ref{theorem2} suggest $g_3(z,q)$ can, in a certain sense, ``pass through'' the unit circle at roots of unity  (as a function of $q$ following a radial path) into the complex plane beyond, and vice versa, while always remaining finite. 

In the theory of quantum modular forms, one encounters functions that exhibit similar behavior (see \cite{BFOR, RolenSchneider}).


\begin{definition}\label{qmfdef}
Following Zagier \cite{Zagierquantum}, we say a function $f: \mathbb{P}^1(\mathbb Q) \backslash S \to \mathbb C$, for a discrete subset $S$, is a quantum modular form if $f(x)-f|_k\gamma(x)=h_{\gamma}(x)$ for a ``suitably nice'' function $h_{\gamma}(x)$, for any $\gamma \in \Gamma$ a congruence subgroup of $\operatorname{SL}_2(\mathbb Z)$, $|_k$ is the usual Petersson slash operator (see \cite{Ono_web}), and ``suitably nice'' implies some pertinent analyticity condition, e.g. $\mathcal{C}_k,\mathcal{C}_{\infty}$, etc. 
\end{definition}
%

These are functions that, in addition to being ``almost'' modular, generically ``blow up'' as $q$ approaches the unit circle from within, 
but are finite 
when $q$ radially approaches certain roots of unity or other isolated points, in which case the limiting values have been related to special values of $L$-functions \cite{BFOR}---and might even extend 
to the complex plane beyond the unit circle as in (\ref{umtfquantum}), a phenomenon called {\it renormalization} (see \cite{Rhoades}). 
We see that $g_3$ exhibits this type of 
renormalization behavior.

Some mock theta functions are closely related to quantum modular forms. As we noted in Section \ref{Sect0}, Ramanujan's mock theta function $f(q)$ (from eq. (\ref{f(q)def})) is, at even-order roots of unity, essentially a quantum modular form plus a modular form\footnote{We give examples of similar cases in \cite{Schneider_strange}.}, through its relation to another rank generating function, the {\it rank generating function $U(z,q)$ for strongly unimodal sequences} \cite{BOPR, FolsomOnoRhoades}, defined by  
\begin{equation}\label{stronglyU}
U(z,q):=\sum_{n=0}^{\infty}q^{n+1}(-zq;q)_n (-z^{-1}q;q)_n = \sum_{n=0}^{\infty}\sum_{m=-\infty}^{\infty}{u}(m,n)z^m q^n,
\end{equation}
where ${u}(m,n)$ is the number of strongly unimodal sequences of rank $m$ and weight $n$. As with $\widetilde{U}, \widetilde{U}_k$ previously, the identity follows directly from the combinatorial definition: here, the $(-z^{-1}q;q)_n^{-1}$ and $(-zq;q)_n^{-1}$ generate {\it distinct} $a_i\leq n,b_j \leq n$, respectively, and $q^{n+1}$ inserts $n+1$ as the peak term. 

This $U(z,q)$ is a function that strikes deep: up to multiplication by rational powers of $q$, $U(i,q)$ is mock modular, $U(1,q)$ is mixed mock modular, and $U(-1,q)$ is a quantum modular form that can be completed to yield a weight $3/2$ non-holomorphic modular form \cite{BFOR}; in fact, mock and quantum modular properties of $U(z,q)$ are proved in generality for $z$ in an infinite set of roots of unity in \cite{FKTVY}.


Of course, $U$ is the $k=1$ case of the {\it rank generating function $U_k(z,q)$ for strongly unimodal sequences with $k$-fold peak}, given by

\begin{equation}
U_k(z,q):=\sum_{n=0}^{\infty}q^{k(n+1)}(-zq;q)_n (-z^{-1}q;q)_{n}= \sum_{n=0}^{\infty}\sum_{m=-\infty}^{\infty}{u}_k(m,n)z^m q^n,
\end{equation}
where ${u}_k(m,n)$ counts $k$-fold peak strongly unimodal sequences of rank $m$ and weight $n$, as above. 
Once again, we note the symmetry $U_k(z^{-1},q)=U_k(z,q)$. As with $\widetilde{U}_k$ in (\ref{U_1, U_2 modular}), we can find (see Theorem 2.8 of \cite{Robert_zeta}) nice relations like
\begin{equation}\label{strongU_1, U_2 modular2}
(zq;q)_{\infty}(z^{-1}q;q)_{\infty}=1-(z+z^{-1}){U}_1(z,q)+{U}_2(z,q),
\end{equation}
which is modular for $z=\zeta_*$ a root of unity, up to multiplication by a power of $q$. 
For instance, at $z=i$, 
equation (\ref{strongU_1, U_2 modular2}) gives 
\begin{equation}\label{strongU_1, U_2 modular.rmk2}
1+{U}_2(i,q)=(iq;q)_{\infty}(-iq;q)_{\infty}=(-q^2;q^2)_{\infty}.
\end{equation}

\begin{remark}
Multiplying (\ref{strongU_1, U_2 modular.rmk1}) and (\ref{strongU_1, U_2 modular.rmk2}) leads to a nice pair of identities relating $U_2$ and $\widetilde{U}_2$:
\begin{equation}
U_2(i,q)=\frac{1-\widetilde{U}_2(i,q)}{\widetilde{U}_2(i,q)-2},\  \  \  \widetilde{U}_2(i,q)=\frac{1+2U_2(i,q)}{1+U_2(i,q)}.
\end{equation}
\end{remark}

Now, taking a similar approach to that in Section \ref{Sect1} with regard to $\widetilde{U}_k$, we can find from Theorem \ref{theorem2}, using an evaluation of $U_k(-z,q)$ at $q=\zeta_m$ much like the theorem\footnote{We note for $k=1, z=1$, $m$ even, that the summation in (\ref{strongU_rfinite}) appears in the right-hand side of (\ref{Watson2}).}
\begin{equation}\label{strongU_rfinite}
U_k(-z,\zeta_m)=\frac{-1}{1-z^m-z^{-m}}\sum_{n=0}^{m-1}\zeta_m^{k(n+1)}(z\zeta_m;\zeta_m)_n (z^{-1}\zeta_m;\zeta_m)_{n},
\end{equation}
that the universal mock theta function $g_3$ also connects to these rank generating functions $U_k$ at roots of unity, through a similar relation to Corollary \ref{another_cor}. 

\begin{corollary}\label{cor2}
For $|z|<1$, we have
\begin{equation*}
g_3(z,\zeta_m)=\frac{z-1}{z}\sum_{k=1}^{\infty} {U}_{k}(-z,\zeta_m)z^{k}\zeta_m^{-k}.
\end{equation*}
\end{corollary}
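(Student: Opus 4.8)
The plan is to verify the claimed identity by starting from its right-hand side, inserting the finite evaluation (\ref{strongU_rfinite}) of $U_k(-z,\zeta_m)$, interchanging the order of summation, summing the resulting geometric series in $k$, and recognizing the outcome as the finite formula for $g_3(z,\zeta_m)$ supplied by Theorem \ref{theorem2}. This parallels the derivation sketched for Corollary \ref{another_cor}, but carried out in the ``collapsed'' setting at a root of unity rather than term-by-term in $q$.

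First I would substitute (\ref{strongU_rfinite}) into $\frac{z-1}{z}\sum_{k\ge1}U_k(-z,\zeta_m)z^k\zeta_m^{-k}$, producing a double sum over $k\ge 1$ and $0\le n\le m-1$. Because the $n$-sum is finite and, for $|z|<1$, the $k$-sum converges absolutely (each term carries a factor $(z\zeta_m^n)^k$ with $|z\zeta_m^n|=|z|<1$), I can interchange the two summations freely. Collecting the powers of $\zeta_m$ gives $z^k\zeta_m^{-k}\zeta_m^{k(n+1)}=(z\zeta_m^n)^k$, so for each fixed $n$ the inner sum is $\sum_{k\ge1}(z\zeta_m^n)^k = z\zeta_m^n/(1-z\zeta_m^n)$.

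Next I would carry out the Pochhammer simplification. Using $(z;\zeta_m)_n=(1-z)(z\zeta_m;\zeta_m)_{n-1}$ together with $(z\zeta_m;\zeta_m)_n=(1-z\zeta_m^n)(z\zeta_m;\zeta_m)_{n-1}$, I expect the term-wise identity
\begin{equation*}
-(z-1)\,\frac{(z\zeta_m;\zeta_m)_n}{1-z\zeta_m^n}=(z;\zeta_m)_n
\end{equation*}
to hold for all $n\ge0$ (the boundary case $n=0$ reducing to $-(z-1)/(1-z)=1=(z;\zeta_m)_0$). After substituting this in, the factor $z$ from the geometric-series numerator cancels the $z^{-1}$ in the prefactor, and the remaining $-(z-1)/(1-z\zeta_m^n)$ is exactly what the identity absorbs, turning $(z\zeta_m;\zeta_m)_n$ into $(z;\zeta_m)_n$; the sum then becomes $(1-z^m-z^{-m})^{-1}\sum_{n=0}^{m-1}\zeta_m^n(z;\zeta_m)_n(z^{-1}\zeta_m;\zeta_m)_n$, which is precisely $g_3(z,\zeta_m)$ by Theorem \ref{theorem2}.

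The hard part will be bookkeeping rather than any deep idea: I must confirm that the side hypotheses of (\ref{strongU_rfinite}) and Theorem \ref{theorem2}---namely $z\neq0,1$, $z^m+z^{-m}\neq1$, and $z$ not a rational power of $\zeta_m$---are compatible with $|z|<1$, and that no denominator $1-z\zeta_m^n$ vanishes on this region (immediate, since $|z\zeta_m^n|=|z|<1$ forces $z\zeta_m^n\neq1$). I would also double-check the boundary term $n=0$ in the Pochhammer identity, where the convention $(z\zeta_m;\zeta_m)_{-1}$ does not literally occur but the direct evaluation still yields agreement, so that the collapse of the double sum is legitimate over the full index range.
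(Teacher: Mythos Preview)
Your proposal is correct and follows essentially the same route as the paper: both arguments combine the finite formula of Theorem~\ref{theorem2} with the evaluation~(\ref{strongU_rfinite}), the Pochhammer identity $(z;\zeta_m)_n=(1-z)(z\zeta_m;\zeta_m)_n/(1-z\zeta_m^n)$, a geometric-series expansion valid for $|z|<1$, and an interchange of summation. The only cosmetic difference is direction---the paper starts from Theorem~\ref{theorem2} and unpacks toward the right-hand side, while you start from the right-hand side and collapse back to Theorem~\ref{theorem2}.
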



%
%

How suggestive it is, in light of the 
relationship between $f(q)$ and $U(-1,q)$ \cite{FolsomOnoRhoades}, to see specializations of $g_3$ giving rise to both forms of $k$-fold unimodal rank generating functions in Corollaries \ref{another_cor} and \ref{cor2}. 

\begin{proof}[Proof of Theorem \ref{theorem2} and Corollary \ref{cor2}]
We start with an elementary observation. For an arbitrary $q$-series with 
coefficients $d_n$, then in the limit as $q$ approaches an $m$th root of unity $\zeta_m$ radially from within the unit circle, we have
\begin{equation}\label{finite}
\lim_{q\to \zeta_m} \sum_{n=1}^{\infty}d_n q^n=\sum_{n=1}^{m}D_n \zeta_m^n\  \text{where}\  D_n:=\sum_{j=0}^{\infty}d_{n+mj},
\end{equation}
so long as $\sum_j d_{n+mj}$ converges. The moral of this example: $q$-series {\it want} to be finite at roots of unity. 

In a similar direction, Theorem \ref{theorem2} arises from the following very general lemma, which the author has spoken on 
at Emory University since 2012 and used for heuristics, 
but has not published previously. It is really Lemma \ref{lemma} below that is the pivotal result of Section \ref{Sect2}; the applications to $g_3(z,\zeta_m)$ form an interesting exercise. 

\begin{lemma}\label{lemma}
Suppose $\phi: \mathbb Z^+ \to \mathbb C$ is a periodic function of period $m\in\mathbb Z^+$, i.e., $\phi(r+mk)=\phi(r)$ for all $k\in \mathbb Z$. Define $f: \mathbb Z^+ \to \mathbb C$ by 
the product
\begin{equation*}
f(j):=\prod_{i=1}^{j}\phi(i),
\end{equation*}
and its summatory function $F(n)$ by $F(0):=0$ and, for $n\geq 1$,
\begin{equation*}
F(n):=\sum_{j=1}^{n}f(j),\  \  \  \  F(\infty):=\lim_{n\to \infty}F(n)\  \text{if the limit exists}.
\end{equation*}
Then the following statements are true:
\begin{enumerate}[(i)]
\item \label{periodic3} For $0\leq r <m$ we have
\begin{equation*}
F(mk+r)=\frac{1-f(m)^k}{1-f(m)}F(m)+f(m)^k F(r).
\end{equation*}
\item \label{periodic4}
For $|f(m)|<1$ we have the finite formula
\begin{equation*}
F(\infty)=\frac{F(m)}{1-f(m)}.
\end{equation*}
\end{enumerate}
\end{lemma}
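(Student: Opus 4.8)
The plan is to exploit the single structural fact that makes everything work: the periodicity of $\phi$ forces the product $f$ to satisfy a clean multiplicative recursion. First I would observe that for every $j$,
\[
f(j+m)=\prod_{i=1}^{j+m}\phi(i)=\Big(\prod_{i=1}^{m}\phi(i)\Big)\prod_{i=1}^{j}\phi(i+m)=f(m)\,f(j),
\]
where the last product collapses to $f(j)$ precisely because $\phi(i+m)=\phi(i)$. Iterating this gives $f(mk+s)=f(m)^{k}f(s)$ for all $k\ge 0$ and $s\ge 1$. This recursion is the engine of both parts.

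For part (\ref{periodic3}), I would decompose the index range $1\le j\le mk+r$ into $k$ consecutive blocks of length $m$ followed by a terminal block of length $r$, writing
\[
F(mk+r)=\sum_{l=0}^{k-1}\sum_{s=1}^{m}f(lm+s)+\sum_{s=1}^{r}f(km+s).
\]
Applying the recursion to each block factors out the block index: $\sum_{s=1}^{m}f(lm+s)=f(m)^{l}F(m)$ and $\sum_{s=1}^{r}f(km+s)=f(m)^{k}F(r)$. Summing the resulting finite geometric series $\sum_{l=0}^{k-1}f(m)^{l}=\frac{1-f(m)^{k}}{1-f(m)}$ (valid for $f(m)\neq 1$) then yields exactly the claimed identity. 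The only subtlety here is the edge case $f(m)=1$, which I would treat separately by interpreting the geometric factor as its limiting value $k$.

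For part (\ref{periodic4}), I would pass to the limit in part (\ref{periodic3}) under the hypothesis $|f(m)|<1$. Since $f(m)^{k}\to 0$, each residue subsequence satisfies $F(mk+r)\to \frac{F(m)}{1-f(m)}$, the term $f(m)^{k}F(r)$ vanishing because the finitely many values $F(0),\dots,F(m-1)$ are bounded. The one point requiring care is that $F(\infty)$ is defined as the full limit $\lim_{n\to\infty}F(n)$, not merely a limit along multiples of $m$; I would resolve this by noting that every $n$ lies in exactly one residue class modulo $m$, and that all $m$ of these subsequences converge to the common value $\frac{F(m)}{1-f(m)}$, so the genuine limit exists and equals it.

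I expect the main obstacle to be organizational rather than conceptual: getting the block decomposition and the index shifts in part (\ref{periodic3}) exactly right, and being careful in part (\ref{periodic4}) to upgrade convergence along each arithmetic progression of indices to convergence of the whole sequence. Neither step is deep once the recursion $f(mk+s)=f(m)^{k}f(s)$ is in hand.
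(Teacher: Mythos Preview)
Your proposal is correct and follows essentially the same approach as the paper: establish the multiplicative recursion $f(mk+s)=f(m)^{k}f(s)$ from the periodicity of $\phi$, decompose $F(mk+r)$ into $k$ full blocks of length $m$ plus a terminal block of length $r$, and sum the resulting finite geometric series. Your version is in fact slightly more careful than the paper's, since you explicitly address the edge case $f(m)=1$ and the upgrade from convergence along residue classes to convergence of the full sequence in part (\ref{periodic4}), points the paper leaves implicit.
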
  
  
\begin{proof}[Proof of Lemma \ref{lemma}]
First we observe that
\begin{equation}\label{periodic5}
f(mk)=\prod_{i=1}^{mk}\phi(i)=\left(\prod_{i=1}^{m}\phi(i)\right)^k=f(m)^k,
\end{equation}
by the periodicity of $\phi$. Then by the definition of $F(n)$ in Lemma \ref{lemma} together with (\ref{periodic5}) we can rewrite
\begin{flalign}
\begin{split}
&F(mk+r)\\&=\sum_{j=1}^{m}f(j)+\sum_{j=m+1}^{2m}f(j)+\sum_{j=2m+1}^{3m}f(j)+...+\sum_{j=m(k-1)+1}^{mk}f(j)+\sum_{j=mk+1}^{mk+r}f(j)\\
&=\left(1+f(m)+f(m)^2+...+f(m)^{k-1}\right)\sum_{j=1}^{m}f(j)+f(m)^k\sum_{j=1}^{r}f(j).\\
\end{split}
\end{flalign}
Recognizing the sum $1+f(m)+f(m)^2+...$ as a finite geometric series completes the proof of (\ref{periodic3}). If $|f(m)|<1$, the infinite case gives (\ref{periodic4}).
\end{proof}

\begin{remark}
Euler's continued fraction formula \cite{Euler} allows one to rewrite any hypergeometric sum as a continued fraction, and vice versa. Then we get another finite formula for $F(\infty)$, which holds for any convergent continued fraction of the following shape with periodic coefficients, including $q$-hypergeometric series when $q$ is replaced by appropriate $\zeta_m$: 
\begin{equation}
F(\infty)=
\frac{{\phi(1)}}{1-\frac{\phi(2)}{1+\phi(2)-\frac{\phi(3)}{1+\phi(3)-\frac{\phi(4)}{1+\phi(4)-...}}}}\  =\  \frac{1}{1-f(m)}\left(\frac{\phi(1)}{1-\frac{\phi(2)}{1+\phi(2)-\frac{\phi(3)}{1+...-\frac{\phi(m)}{1+\phi(m)}}}}\right).
\end{equation}
Therefore, the finiteness and renormalization considerations in this section also apply to $q$-continued fractions.
\end{remark}

Clearly if we take $\phi$ to be sine, cosine, etc. in Lemma \ref{lemma}, we can produce a variety of trigonometric identities. More pertinently, if we replace $\phi(i)$ with $\widetilde{\phi}(t,i):=  t \phi(i)$, this $\widetilde{\phi}$ also has period $m$; then we see $\widetilde{f}(j):=\prod_{i=1}^{j}\widetilde{\phi}(t,i)= t^j f(j)$. Thus the summatory functions $\widetilde{F}(n)=\widetilde{F}(t,n)$ and $\widetilde{F}(\infty)=\widetilde{F}(t,\infty)$ represent a polynomial and a power series in $t$, respectively---which are, respectively, subject to (\ref{periodic3}) and (\ref{periodic4}) of Lemma \ref{lemma}. Then for $\phi$ with period $k$ and the product $f$ as defined above, we get identities like
\begin{equation}
\sum_{n=1}^{\infty}f(n)t^n=\frac{1}{1-f(k)t^k}\sum_{n=1}^{k}f(n)t^n.
\end{equation} 
(We could also take $\widetilde{\phi}(t,i)$ equal to $t^i\phi(i)$ or $t^{2i}\phi(i)$ or $t^{2i-1}\phi(i)$, to lead to power series of other familiar shapes; however, such $\widetilde{\phi}$ are not generally periodic.)

Thinking along these lines, if  we set  
\begin{equation*}
\phi(i)=  t \frac{(1-a_1 q^{i-1})(1-a_2 q^{i-1})...(1-a_r q^{i-1})}{(1-b_1 q^{i-1})(1-b_2 q^{i-1})...(1-b_s q^{i-1})}
\end{equation*}
for $a_*, b_* \in \mathbb C$, the product $f(j)$ becomes a quotient of $q$-Pochhammer symbols, 
producing the $q$-hypergeometric series
$$F(t,\infty)=\  _{r}F_s (a_1,...,a_r;b_1,...,b_s; t:q).$$ If $q\to \zeta_m$ an $m$th root of unity, then $\phi$ is also cyclic of period $m$,
and in the radial limit $_{r}F_s (a_1,...,a_r;b_1,...,b_s; t:\zeta_m)$ is subject to Lemma \ref{lemma}(\ref{periodic4}), 
so long as in the denominator $(1-b_* \zeta_m^{i})\neq 0$ for any $i$.

Remembering the ``moral'' of equation (\ref{finite}), then similar considerations apply to almost all $q$-series and Eulerian series, for $q=\zeta_m$ a root of unity that does not produce singularities. In particular, so long as the choice of $z$ also does not lead to singularities, it is immediate from Lemma \ref{lemma} by the definition (\ref{umtf}) of $g_3$ that 
\begin{flalign}\label{simple}
\begin{split}
g_3(z,\zeta_m)&=\frac{1}{1-(z;\zeta_m)_m^{-1}(z^{-1}\zeta_m;\zeta_m)_m^{-1}}\sum_{n=1}^{m} \frac{\zeta_m^{n(n-1)}}{(z;\zeta_m)_{n}(z^{-1}\zeta_m;\zeta_m)_{n}}\\
&=\frac{2-z^m-z^{-m}}{1-z^m-z^{-m}}\sum_{n=1}^{m} \frac{\zeta_m^{n(n-1)}}{(z;\zeta_m)_{n}(z^{-1}\zeta_m;\zeta_m)_{n}},\\
\end{split}
\end{flalign}
where for the final equation we used the elementary fact that $$(X;\zeta_m)_m=1-X^m$$ in the leading factor. For a slightly simpler formula, we apply Lemma \ref{lemma} to the identity for $g_3(z^{-1};\zeta_m^{-1})$ in Theorem \ref{theorem1} instead, then take $z\mapsto z^{-1}$ and $\zeta_m\mapsto \zeta_m^{-1}$, to see
\begin{equation}\label{simpler}
g_3(z,\zeta_m)=\frac{2-z^m-z^{-m}}{1-z^m-z^{-m} }\sum_{n=1}^{m} \frac{\zeta_m^{-n}}{(z^{-1};\zeta_m^{-1})_{n}(z\zeta_m^{-1};\zeta_m^{-1})_{n}}.
\end{equation}
We note that the leading factor is symmetric under inversion of $z$.

\begin{remark} 
Jang--L\"{o}brich prove finite formulas similar 
to (\ref{simple}) and (\ref{simpler}) for $g_3(z,\zeta_m)$ \cite{JangLobrich}, by different methods.
\end{remark}

A particularly lovely aspect of $q$-series such as these is that they transform into an infinite menagerie of shapes, limited only by the curiosity of the analyst. (For instance, see Fine \cite{Fine} for a stunning exploration of $q$-hypergeometric series.\footnote{Fine writes: 
%
``The beauty and surprising nearness to the surface of some of the results could easily lead one to embark on an almost uncharted investigation of [one's] own.'' (\cite{Fine}, p. xi)}) 
Then a form like $g_3$ might have a number of different finite formulas. 

To derive Theorem \ref{theorem2}, which is simpler than the preceding expressions for $g_3$, we use another factorization strategy in the $q$-Pochhammer symbols. Again we exploit that $$(X;\zeta_m)_m=1-X^m=(X;\zeta_m^{-1})_{m};$$ thus for $0\leq n\leq m$, since $\zeta_m^{-j}=\zeta_m^{m-j}$ we have 
\begin{flalign}\label{factorization2}
\begin{split}
(X;\zeta_m^{-1})_n\  &=\  (1-X)(1-X\zeta_m^{m-1})(1-X\zeta_m^{m-2})...(1-X\zeta_m^{m-(n-1)})\\
&=\  \frac{(1-X)(X;\zeta_m)_m}{(X;\zeta_m)_{m-n+1}}\  =\  \frac{(1-X)(1-X^m)}{(X;\zeta_m)_{m-n+1}}.\\
\end{split}
\end{flalign}
Making the change of indices $n\mapsto m-n+1$ in the summation in (\ref{simpler}) then yields
\begin{equation*}
\sum_{n=1}^{m} \frac{\zeta_m^{-(m-n+1)}}{(z^{-1};\zeta_m^{-1})_{m-n+1}(z\zeta_m^{-1};\zeta_m^{-1})_{m-n+1}}=\sum_{n=1}^{m}\frac{\zeta_m^{n-1}(z^{-1};\zeta_m)_{n}(z\zeta_m^{-1};\zeta_m)_{n}}{(1-z^{-1})(1-z\zeta_m^{-1})(2-z^m-z^{-m})}.
\end{equation*}
Substituting this final expression back into (\ref{simpler}), with a little algebra and adjusting of indices, gives Theorem \ref{theorem2}. 

To prove Corollary \ref{cor2}, we use geometric series, along with an order-of-summation swap and index change, to rewrite Theorem \ref{theorem2} in the form
\begin{flalign}
\begin{split}
g_3(z,\zeta_m)&=\frac{1-z}{(1-z^m-z^{-m})}\sum_{n=0}^{m-1}\zeta_m^{n}\frac{(z\zeta_m;\zeta_m)_{n}(z^{-1}\zeta_m;\zeta_m)_{n}}{1-z\zeta_m^n}\\
&=\frac{z^{-1}(1-z)}{z(1-z^m-z^{-m})}\sum_{k=1}^{\infty}z^k\zeta_m^{-k}\sum_{n=0}^{m-1}\zeta_m^{k(n+1)}(\zeta_m;\zeta_m)_{n}(z^{-1}\zeta_m;\zeta_m)_{n}.\\
\end{split}
\end{flalign}
Comparing this with the formula (\ref{strongU_rfinite}) for $U(-z,\zeta_m)$, which follows easily from Lemma \ref{lemma}, gives the corollary. (The sum on the right might be simplified further using (\ref{finite}).)

\begin{remark}
Convergence in these formulas depends on one's choice of substitutions; 
for a particular choice, careful analysis may be required to show boundedness 
as $q$ approaches the 
natural boundary of a $q$-series 
(see Watson \cite{Watson} for examples). 
\end{remark}
\end{proof}

%

We note that a slight variation on the proof above leads to finite formulas at applicable roots of unity for the even-order universal mock theta function $g_2(z,q)$ of Gordon--McIntosh \cite{GordonMcIntosh} as well, by an alternative approach to that of Bringmann--Rolen \cite{BringmannRolen}. Using transformations from Andrews \cite{Andrews}, Fine \cite{Fine}, and other authors, still simpler formulas might be found for particular specializations of $g_3$ at roots of unity. We demonstrate this point below.

\begin{example}\label{example}
The limit of the mock theta function $f(q)$ at $\zeta_m$ an odd-order root of unity is given by
\begin{equation*}\label{f_finite}
f(\zeta_m)=1-\frac{2}{3}\sum_{n=1}^{m}(-1)^n \zeta_m^{-(n+1)} (-\zeta_m^{-1};\zeta_m^{-1})_n.
\end{equation*}
\end{example}

\begin{proof}[Proof of Example \ref{example}] The function $f(q)$ is convergent at odd roots of unity; however, for the reader's convenience, we will sketch a proof of convergence to the given value for just the case $q \to \zeta_m$ along a radial path. By (26.22) in \cite{Fine}, Ch. 3, Ramanujan's mock theta function $f(q)$ defined in (\ref{f(q)def}) 
can be rewritten
\begin{equation}\label{f(q)Fine}
f(q)=1-\sum_{n=1}^{\infty}\frac{(-1)^{n}q^n }{(-q;q)_n}.
\end{equation}
To show the summation on the right 
is bounded as $q$ approaches an odd-order root of unity radially, we exactly follow the steps of Watson's analysis of the mock theta function $f_0(q)$ in \cite{Watson}, Sec. 6. In Watson's nomenclature, take $M=2$, $N$ odd, to write $q=e^{2\pi \text{i}/N}=\zeta_N$. Then by replacing $q^{(nN+m)^2}$ with $(-1)^{nN+m}q^{nN+m}$ in the numerators of the $n\geq1$ terms of the series $f_0(q)$ 
(we note that Watson's $m$ is not the same as the subscript of $\zeta_m$ we use throughout this paper, which corresponds to $N$ in this proof), one sees 
$$\left|1- \sum_{n=1}^{\infty}\frac{(-1)^{n}q^n }{(-q;q)_n}\right|\leq 2\sum_{n=0}^{N-1}\left|\frac{q^n }{(-q;q)_n}\right|<\infty$$
when $q=\rho \zeta_N$ with $0\leq \rho \leq 1$. 
To see the value the series converges to, consider the $(Nk+r)$th partial sum, with $r<N$, of the right-hand side of (\ref{f(q)Fine}) as $\rho\to 1^-$, in light of Lemma \ref{lemma} (i). In fact, as $|(-1)^{N}\zeta_N^N/(-\zeta_N,\zeta_N)_N|=1/2<1$, then part (ii) of Lemma \ref{lemma} applies as $Nk+r\to\infty$ and (also taking into account that $f(\rho\zeta_N)$ converges uniformly for $\rho<1$) we may write 
\begin{equation}\label{rho_lim}
\lim_{\rho\to 1^-} \left(1- \sum_{n=1}^{\infty}\frac{(-1)^{n}(\rho\zeta_N)^n }{(-\rho\zeta_N;\rho\zeta_N)_n}\right)=1-\frac{2}{3}\sum_{n=1}^{N}\frac{(-1)^{n}\zeta_N^n }{(-\zeta_N;\zeta_N)_n}.
\end{equation}
Now, observe that (\ref{factorization2}) gives
\begin{equation}\label{factorization3}
(-\zeta_N;\zeta_N)_n^{-1}=(-\zeta_N^{-1};\zeta_N^{-1})_{N-n-1}.
\end{equation}
Applying (\ref{factorization3}) to the right-hand side of (\ref{rho_lim}), then making the change $n\mapsto N-n-1$ in the indices, 
we arrive at the desired result.
\end{proof}

Continuing in this fashion, we can find a formula for this radial limit that is even easier to compute.\footnote{In a 2013 study \cite{ClemmSchneider} of $f(q)$ at roots of unity using Example \ref{example2}, A. Clemm and the author found a number of elegant relations in SageMath, but without formal proof. For instance, at fifth-order roots of unity $\zeta_5^i$, one computes
$f(\zeta_5)f(\zeta_5^2)f(\zeta_5^3)f(\zeta_5^4)=256/81$.
Moreover, one computes 
\begin{equation*}
\zeta_5=9\slash 16\  f(\zeta_5)f(\zeta_5^3),\  \  \    
\zeta_5^2=9\slash 16\  f(\zeta_5)f(\zeta_5^2),\  \  \  
\zeta_5^3=9\slash 16\  f(\zeta_5^3)f(\zeta_5^4),\  \  \  
\zeta_5^4=9\slash 16\  f(\zeta_5^2)f(\zeta_5^4),
\end{equation*}
which are equivalent to 
$\zeta_5^i f(\zeta_5^i) = \zeta_5^{-i} f(\zeta_5^{-i}).$
Do similar relations hold for other roots of unity?} 

\begin{example}\label{example2}
For $\zeta_m$ an odd-order root of unity we have
\begin{equation*}\label{f_finite4}
f(\zeta_m)=\frac{4}{3}\sum_{n=1}^{m}(-1)^n (-\zeta_m^{-1};\zeta_m^{-1})_n.
\end{equation*}
\end{example}

\begin{proof}[Proof of Example \ref{example2}]
Here we use only finite sums, so we do not need to justify convergence. Let us define an auxiliary series
\begin{equation*}
h(\zeta_m)=\frac{2}{3}\sum_{n=1}^{m}(-1)^n (-\zeta_m^{-1};\zeta_m^{-1})_n.
\end{equation*}
Then using Example \ref{f_finite}, with a little arithmetic and adjusting of indices, gives
\begin{equation*}
f(\zeta_m)-h(\zeta_m)\  =\  1-\frac{2}{3}\sum_{n=1}^{m}(-1)^n (-\zeta_m^{-1};\zeta_m^{-1})_{n+1}\  =\  h(\zeta_m).
\end{equation*}
Comparing the left- and right-hand sides above implies our claim.
\end{proof}

Indeed, by the considerations here we can find both finite formulas at roots of unity, and inverted versions using factorizations such as in (\ref{invertmtf}) leading to forms such as (\ref{inversion}) and (\ref{umtfquantum}), for a great many $q$-hypergeometric series. Whether or not they enjoy modularity properties, these can display very interesting behaviors, 
emerging outside the unit circle radially from an entirely different point $\zeta_m^{-1}$ than the point on the circle $\zeta_m$ approached from within, and likewise when entering the circle radially at roots of unity from without, a little like quantum tunneling in physics. Moreover, the map inside the unit circle in the variable $q$ looks like an ``upside-down hyperbolic mirror-image'' of the function's behavior on the outside. 
(Taking $q \mapsto \overline{q}$ in either the $|q|<1$ or $|q|>1$ piece of (\ref{umtfquantum}) turns the map ``right-side up'', but at the expense of holomorphicity.\footnote{As Tyler Smith, Emory University Department of Physics, noted to the author \cite{TylerSmith}.}) 

This imagery reminds the author of depictions of white holes and wormholes in science fiction. Do there exist ``points-of-exit'' (and entry) analogous in some way to roots of unity, at the event horizon of a black hole? Is there a 
mirror-image universe 
contained within? 
We won't take these fantastical analogies too seriously, yet one is led to wonder: 
how deep is the connection between $q$-series and phenomena at nature's fringe?
 
\
\section*{Acknowledgments}
I am grateful to George Andrews for helpful comments pertaining to Examples \ref{example} and \ref{example2}, 
and for suggesting the Watson references; to Jeremy Lovejoy for useful correspondence regarding unimodal rank generating functions; to Larry Rolen and the anonymous referee for revisions that strengthened this work; and to Krishnaswami Alladi, Olivia Beckwith, David Borthwick, Amanda Clemm, John Duncan, Amanda Folsom, Marie Jameson, Benjamin Phelan, Robert Rhoades, Tyler Smith, and my Ph.D. advisor, Ken Ono, for discussions that informed my study.

\end{document}